\newcommand{\COLORON}{1}
\newcommand{\NOTESON}{1}
\newcommand{\Debug}{0}
\newcommand{\comment}[1]{}
\newcommand{\COMMENT}[1]{}
\definecolor{darkgray}{rgb}{0.3,0.3,0.3}
\newcommand{\defi}[1]{{\color{darkgray}\emph{#1}}}
\newcommand{\acknowledgement}{\section*{Acknowledgement}}
\newtheorem{proposition}{Proposition}[section]
\newtheorem{definition}[proposition]{Definition}
\newtheorem{theorem}[proposition]{Theorem}
\newtheorem{corollary}[proposition]{Corollary}
\newtheorem{lemma}[proposition]{Lemma}
\newtheorem{conjecture}{{Conjecture}}[section]
\newtheorem{problem}[conjecture]{{Problem}}
\newtheorem{question}[conjecture]{{Question}}
\newtheorem{examp}[proposition]{Example}%[section]
\newcommand{\FIG}{0}
\newcommand{\note}[1]{ 

\hspace*{-30pt}
	{\color{blue}  NOTE: \color{Turquoise}{\small  \tt \begin{minipage}[c]{1.1\textwidth}  #1 \end{minipage} \ignorespacesafterend }} 
	
	}
\else \newcommand{\note}[1]{} \fi
\newcommand{\afsubm}[1]{ \ifnum \Debug = 1 {\mymargin{#1}}
\fi} %For notes on after-submission changes
\newcommand{\fig}[1]{Figure ``{#1}''}
\else \newcommand{\fig}[1]{Figure~\ref{#1}} \fi
\renewcommand{\color}[1]{}
\newcommand{\N}{\ensuremath{\mathbb N}}
\newcommand{\R}{\ensuremath{\mathbb R}}
\newcommand{\Z}{\ensuremath{\mathbb Z}}
\newcommand{\cm}{\ensuremath{\mathcal M}}
\newcommand{\sm}{\backslash}
\DeclareRobustCommand{\cev}[1]{%
  \mathpalette\do@cev{#1}%
}
\newcommand{\do@cev}[2]{%
  \fix@cev{#1}{+}%
  \reflectbox{$\m@th#1\vec{\reflectbox{$\fix@cev{#1}{-}\m@th#1#2\fix@cev{#1}{+}$}}$}%
  \fix@cev{#1}{-}%
}
\newcommand{\fix@cev}[2]{%
  \ifx#1\displaystyle
    \mkern#23mu
  \else
    \ifx#1\textstyle
      \mkern#23mu
    \else
      \ifx#1\scriptstyle
        \mkern#22mu
      \else
        \mkern#22mu
      \fi
    \fi
  \fi
}
\newcommand{\seq}[1]{\ensuremath{(#1_n)_{n\in\N}}} 
\newcommand{\g}{\ensuremath{G\ }}
\newcommand{\G}{\ensuremath{G}}
\renewcommand{\Pr}{\mathbb{P}}
\newcommand{\Cg}{Cayley graph}
\newcommand{\Tr}[1]{Theorem~\ref{#1}}
\newcommand{\Sr}[1]{Section~\ref{#1}}
\newcommand{\Prr}[1]{Pro\-position~\ref{#1}}
\newcommand{\Cr}[1]{Corollary~\ref{#1}}
\newcommand{\Dr}[1]{De\-fi\-nition~\ref{#1}}
\renewcommand{\iff}{if and only if}
\newcommand{\fe}{for every}
\newcommand{\Fe}{For every}
\newcommand{\st}{such that}
\newcommand{\ti}{there is}
\newcommand{\wrt}{with respect to}
\newcommand{\rw}{random walk}
\newcommand{\labtequ}[2]{%\labtequc{#1}{#2}}
 \begin{equation} \label{#1} 	\begin{minipage}[c]{0.9\textwidth}  #2 \end{minipage} \ignorespacesafterend \end{equation} } 
\newcommand{\labtequc}[2]{ \begin{equation} \label{#1} 	\text{   #2 } \end{equation} }
\newcommand{\mymargin}[1]{% <- dieses % verhindert ein ungewolltes Leerzeichen
 \ifnum \Debug = 1
  \marginpar{%
    \begin{minipage}{\marginparwidth}\small%
      \begin{flushleft}%
        {\color{blue}#1}%
      \end{flushleft}%
   \end{minipage}%
  }%
 \fi
}%
\newcommand{\extras}[1]{% <- dieses % verhindert ein ungewolltes Leerzeichen
 \ifnum \Debug = 1
\section{Extras} #1
 \fi
}%
\newcommand{\mySection}[2]{}
\newcommand{\la}{\ensuremath{\lambda}}
\newcommand{\cme}{\ensuremath{\prescript{}{e}\cm}}
\newcommand{\coco}{\ensuremath{\nu}}
\newcommand{\epdim}[1]{\ensuremath{\prescript{}{e}{{pdim#1}}}}
\newcommand*{\zp}[1][Z]{\mathbb{\uppercase{#1}}_{+}}
\newcommand{\pdim}{\ensuremath{pdim}}
\newcommand{\ndim}{\ensuremath{\coco dim}}
\renewcommand{\labtequc}[2]{ \begin{equation} \label{#1} 	\text{   #2 }\end{equation} }
    \newenvironment{itemize*}%
      {\begin{itemize}%
        \setlength{\itemsep}{0pt}%
        \setlength{\parskip}{0pt}}%
      {\end{itemize}}
    \newenvironment{enumerate*}%
      {\begin{enumerate}%
        \setlength{\itemsep}{8pt}%
        \setlength{\parskip}{-8pt}}%
      {\end{enumerate}\vspace*{-8pt}
}
\title{A Notion of Dimension based on Probability on Groups}
\begin{document}

\author{Agelos Georgakopoulos\thanks{Supported by  EPSRC grants EP/V048821/1, EP/V009044/1, and EP/Y004302/1.}}
\affil{  {Mathematics Institute}\\
 {University of Warwick}\\
  {CV4 7AL, UK}}
%\cfoot{\arabic{page}}

\date{}
%\vspace*{-1.2cm}

\maketitle

\vspace*{-1.1cm}

\begin{abstract}
We introduce notions of dimension of an infinite group, or more generally, a metric space, defined using percolation. Roughly speaking, the percolation dimension $pdim(G)$ of a group $G$ is the fastest rate of decay of a symmetric probability measure $\mu$ on $G$, such that Bernoulli percolation on $G$ with connection probabilities proportional to $\mu$ behaves like a Poisson branching process with parameter 1 in a sense made precise below. We show that $pdim(G)$ has several natural properties: it is monotone decreasing with respect to subgroups and quotients, and coincides with the growth rate exponent for several classes of groups.
\end{abstract}

\section{Background and Motivation}
We introduce new invariants of infinite groups that behave similarly to the growth rate but are less susceptible to the effect of dead-ends. We will define a notion of dimension $\pdim(G)$ of a finitely generated group \g using (long-range, Bernoulli) percolation. It will turn out to have several natural properties. In particular, for groups of polynomial growth it follows from parallel work of Spanos \& Tointon \cite{SpaToiSpr} that $\pdim(G)$ coincides with the growth exponent. Importantly, unlike most parameters defined via percolation, $\pdim(G)$ does not depend on the choice of a generating set.

\smallskip

\comment{
\begin{figure}[t]
  \centering
    \begin{tikzpicture}[font=\small, scale=1.5]
      \input{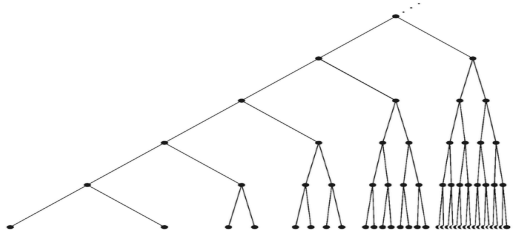}
    \end{tikzpicture}
  \caption{\label{fig:canopy} The \defi{canopy tree} $T$ consists of an 1-way infinite path $x_0 x_1 \ldots$ and a binary tree of depth $i$ attached to each $x_i$.}
\end{figure}
}

The growth rate is one of the most important invariants of infinite groups due to its interplay with other properties of the group, as witnessed by Gromov's polynomial growth theorem or the fact that non-amenable groups have exponential growth. Nevertheless, there are cases where the growth of a group can be somewhat deceiving. Let me start explaining this with an example that is not a group, but much easier to explain. The graph in \fig{canopy} is the so-called \defi{canopy tree} $T$, frequently used as an example of a local limit of an infinite sequence of finite graphs\footnote{More specifically, $T$ is the Benjamini--Schramm limit of \seq{T}, where $T_n$ stands for the binary tree of depth $n$ \cite{BeSchrRec,NacPla}; but this is not important here.}. It is easy to see that $T$ has exponential growth. However, in certain respects it behaves like an `1-dimensional' graph: for example, it has $p_c(T)=1$, i.e.\ it has a trivial phase transition for Bernoulli percolation. Indeed, $T$ can be obtained by attaching finite trees along an 1-way infinite path $P$, and it will behave much more like $P$ than the infinite binary tree with respect to most statistical mechanics models despite its large growth rate. One explanation to this is that the vast majority of directions that a walker on $T$ can explore lead into \defi{dead-ends}, i.e.\ they cannot be extended to increase the distance from a fixed vertex. 

   \begin{figure}[htbp]
   \centering
   \noindent
   \includegraphics[width=.55\linewidth, height = .25\linewidth]{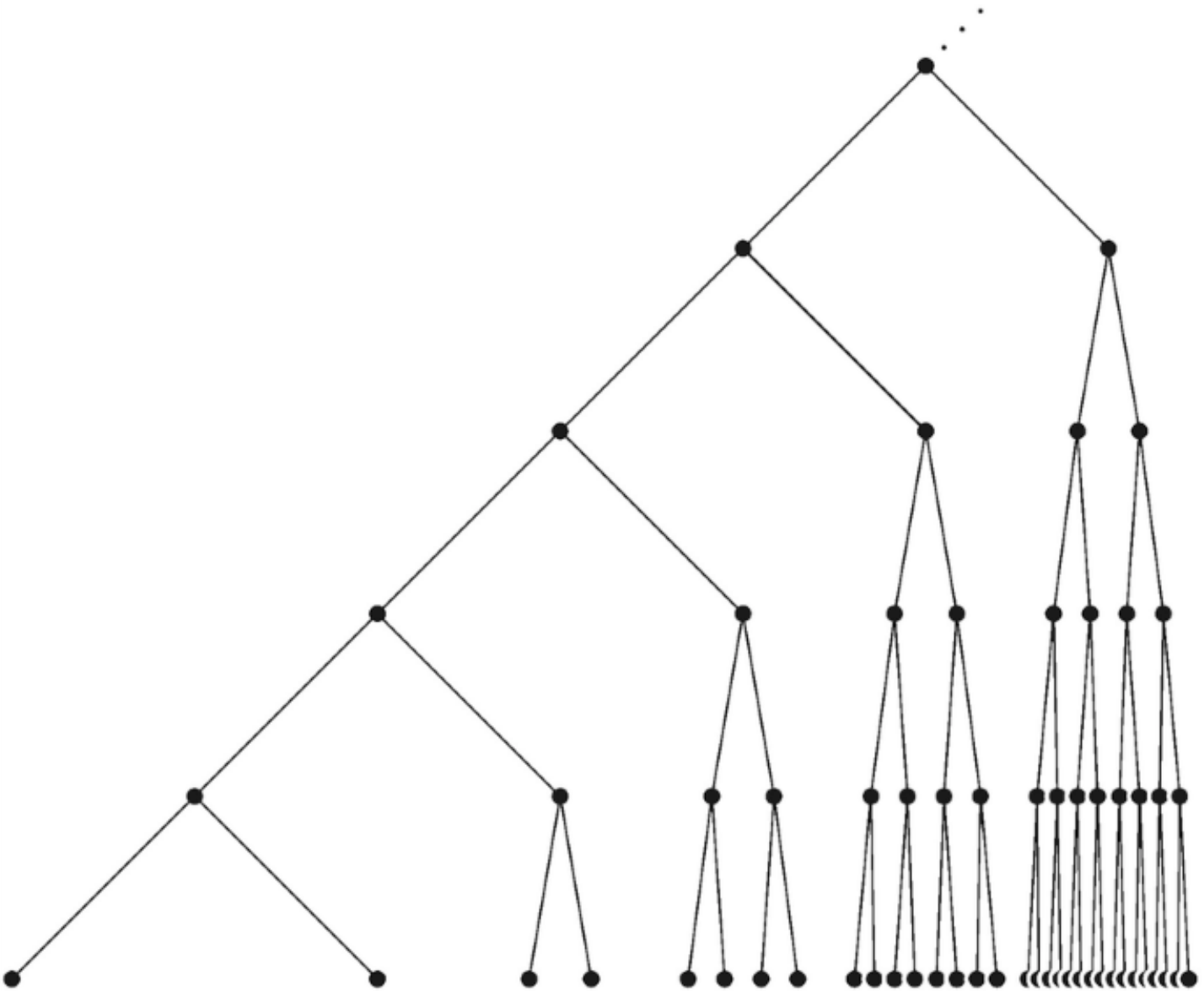}
   \caption{\small The \defi{canopy tree} $T$ consists of an 1-way infinite path $x_0 x_1 \ldots$ and a binary tree of depth $i$ attached to each $x_i$.%\footnote{Figure reproduced from \cite{NacPla} with the kind permission of the author.}
   }
   \label{canopy}
   \end{figure}
   
%\pdfFig{canopy}{The \defi{canopy tree} $T$ consists of an 1-way infinite path $x_0 x_1 \ldots$ and a binary tree of depth $i$ attached to each $x_i$.}{.4}

Such phenomena arise in \Cg s of groups, although in much more subtle ways. Notable examples include Wilson's groups of non-uniform exponential growth \cite{WilUni}, and  the groups of oscillating intermediate growth of Kassabov \& Pak \cite{KasPakOsc}. 

The notion of dimension we will  introduce below assigns dimension 1 to the above example $T$. It has several desirable properties such as being monotone decreasing with respect to subgroups and quotients of groups. 

\medskip
%\cite{HugoSurvey}

Statistical mechanics usually studies physical phenomena, but there is a recent trend of studying its  stochastic processes on (\Cg s of) arbitrary groups, and making a connection between the algebraic or geometric properties of the group and the behaviour of the process. The most studied examples are random walk and (Bernoulli) percolation. The pioneering result in this direction is Kesten's theorem that the \rw\ return probability decays exponentially \iff\ the group is non-amenable \cite{KesFul}. An analogous result of Benjamini \& Schramm for percolation %, triggered by the influential paper of Benjamini \& Schramm \cite{BeSchrPer}, 
states that the group $G$ is non-amenable \iff\ there is a $G$-invariant Bernoulli percolation model with a non-uniqueness phase \cite{BeSchrPer}.  Such results initiated the area of probability on groups, some of the recent advances of which involve deep machinery from both worlds of probability and group theory, see  \cite{DCGRSY,EasHutLoc,HutToinNon,PanSevGap,SpaToiSpr,TesToiBal} for some examples.

Typically, such results use our understanding of groups in order to deduce properties of the 
stochastic process. As the wealth and depth of such results grows, the question of whether we can go in the opposite direction becomes pressing: can percolation theory be used as a tool to obtain deterministic results about groups? I hope that the notions defined here, being numerical invariants of a groups independent of a choice of a \Cg,  may make a step in this direction. 

Additional motivation for the notions defined here comes from Gromov's  \cite{GroAsyInv}  \defi{asymptotic dimension} $asdim(G)$ of a group $G$. Like $asdim(G)$, the dimension we will define is designed so as to capture the large scale geometry of a group, it generalises to arbitrary metric spaces, and has desirable properties like monotonicity \wrt\ subgroups. 

The idea of `embedding' a stochastic process into a space in order to study its coarse metric properties is not new, notions with a similar flavour include Ball's \cite{Ball} Markov type and  Gaboriau's  \cite{GabSewCos} cost of a group.

\bigskip
This note is structured as follows. We start with the formal definition of $\pdim(G)$ in \Sr{sec def}, and provide some basic properties in \Sr{sec prop}. The highlight of this section is \Cr{corol}, stating that $\pdim(G)$ coincides with the exponent of the growth rate when \g has polynomial growth. In \Sr{sec nu} we offer a variant $\ndim(G)$ of $\pdim(G)$ defined using self-avoiding walks instead of percolation, and show that $\ndim(G) = \pdim(G)$ when \g has polynomial growth (\Cr{corol nu}). In \Sr{sec interm} we adapt our notions to groups of super-polynomial growth. The main result here is that every non-amenable group achieves the maximum possible value of $\pdim(G)$ (\Tr{thm nona}). \Sr{sec PG} introduces a notion of percolative group that is implicit in the aforementioned results but might be of independent interest to percolation theorists. \Sr{sec BG} extends some of our definitions to metric spaces. We conclude with some open problems in \Sr{probs}.

\section{Definition of the dimension of a group} \label{sec def}

Given a group \G, we call a probability measure $\mu: G \to [0,1]$ \defi{symmetric}, if $\mu(g)=\mu(g^{-1})$ \fe\ $g\in G$. Let $\cm(G)$ denote the set of all symmetric probability measures on \G. For simplicity \g will be finitely generated, but our definitions can be adapted to topological groups endowed with a metric.

One can use $\mu$ to define an \defi{edge-weighted \Cg} of \G, in which the generating set is the support of $\mu$ and every edge $gh$ is given the weight $\mu(g^{-1}h)$. With this notion in mind, it is rather straightforward to adapt any model of statistical mechanics defined on graphs into one where the `strengths of interactions' are given by $\mu$. Thus any $\mu \in  \cm(G)$ can be used to define a plethora of models. Examples include \rw, various percolation models, self-avoiding walks, and the Ising model.

%For \rw\ this is in fact a standard setup: we consider the Markov chain whose transition probability from  $g$ to $h$ is $\mu(g^{-1}h)$. In other words, the position at step $n$ is $S_n= X_1 X_2 \ldots X_n$, where the $X_i$ are i.i.d.\ elements of \g with distribution $\mu$. %(Our symmetry condition forces this Markov chain to be reversible, a condition not always required in the \rw\ literature.)

Any $\mu \in  \cm(G)$  naturally defines a percolation process on $G$ as follows. Given $\la \in \zp[r]$, we define a random (multi-)graph $G^\mu(\la)$ with vertex set $G$, by letting the number of (parallel) edges between two elements $g,h\in G$ be an independent Poisson random variable with mean $\la \mu(g^{-1}h)$. Note that  $G^\mu(\la)$  is a $G$-invariant percolation model, i.e.\ the natural action of $G$ on $G^\mu(\la)$ defined by multiplication from the left preserves the probability distribution of $G^\mu(\la)$. This setup is a generalisation of Long Range Percolation, which has been studied in statistical mechanics since the 80's, see e.g.\ \cite{AizKeNew}. % \cite{AizNewDis,SchulLong}. 
Similarly to the standard percolation threshold $p_c$, we  define 
\labtequc{def lac}{$\la_c(\mu):= \sup \{ \la \mid \Pr(G^\mu(\la) \text{ has an infinite component}) = 0 \}.$}

Note that $\la_c(\mu)\geq 1$ since an exploration of the component of the identity in $G^\mu(\la)$ is dominated by a Poisson branching process with parameter \la.  Answering a question from \cite{IGperc}, Xiang \& Zou proved that every countable group admits a symmetric measure $\mu$ such that $\la_c(\mu)$ is finite \cite{XiaZouCou}, thereby generalising the non-triviality of $p_c$ \cite{DCGRSY} to locally finite groups.

%\mymargin{Experts will easily see how $\mu$ defines a First Passage Percolation model too along these lines.}

Apart from percolation, one can use the same $\mu$ to define long-range versions of other models, e.g.\ the connective constant. We will elaborate on this in \Sr{sec nu}.

This is more than just an economical way for constructing many models at once. Firstly, having a common defining measure $\mu$ allows for some direct comparisons between various models (see \eqref{lacoco} below for an example). Secondly, our probability measures  $\cm(G)$ offer in some sense a `normalised' framework: it is more informative to compare the two percolation models on a common group $G$ arising from $\mu_1,\mu_2\in \cm(G)$ than the percolation models on two \Cg s of \g with different degrees.

\comment{
Most of the above discussion extends to setups beyond groups: we could replace $G$ by any g
}

\smallskip
We now add geometry to the picture by recalling that any choice of a \Cg\ $Cay(G,S)$ of a finitely generated group \g \wrt\ a generating set $S$ turns \g into a metric space, and the `large-scale' properties of the metric are independent of the choice of $S$. This will allow us to associate a \defi{rate of decay} $s\in \R_+$ to any $\mu \in  \cm(G)$, independent of the choice of $S$. {Roughly speaking, we then define the dimension of \g as the supremal $s$ \st\ \ti\ a sequence of $\mu_i \in  \cm(G)$, each of rate of decay $s$, and exhibiting `mean-field behaviour'.} When defining  \defi{percolation dimension} $pdim(G)$, mean-field behaviour means that $\lim_{i\to \infty} \la_c(\mu_i) = 1$, where $\la_c$ is defined by \eqref{def lac}. In other words, it means that the model approximates the Poisson Branching Process in the sense that at criticality, the average degree of the root vertex converges to 1 as $i\to \infty$. %More generally, we will interpret \defi{mean-field behaviour} as approximating the behaviour of the corresponding model on the $d$-regular infinite tree as $d\to \infty$ to define other notions of dimension like the \defi{connective dimension} $\ndim(G)$. 

\medskip

We now state the formal definition of $pdim(G)$. Let $X = Cay(G,S)$ be a \Cg\ of \g \wrt\ a finite generating set $S$ (hence \g is finitely generated). Let $|g|:= d_X(g,e)$ denote the distance from $g$ to the identity $e$ \wrt\ the graph metric of $X$. For any $s,b\in \R_+$, we define the set of symmetric probability measures $\cm_s^b(G) \subset \cm(G)$ with decay rate $s$ (and constant $b$) as follows:
\labtequc{msb}{$\cm_s^b(G):= \{ \mu \in \cm(G) \mid \mu(g) < b |g|^{-s}\}.$}

\begin{definition}
The \defi{percolation dimension} $pdim(G)$ of \g is defined by
\labtequc{def pdim}{$pdim(G):= \sup \left\{ s \mid  \exists b \in \R_+ \text{ such that } \liminf_{\mu\in \cm_s^b(G)} \la_c(\mu) =1 \right\}.$}
\end{definition}

Note that although $|g|$, and hence $\cm_s^b(G)$, depends on the choice of $S$, the value of $pdim(G)$ does not: different choices of $S$ may impose a different choice of $b$ in the above definition, but they cannot affect the existence of such a constant. This is similar to the standard calculation showing that the growth rate of $G$ is independent of the choice of $S$.

\medskip
Rather than defining one concept, \eqref{def pdim} can be thought of as a recipe for defining many such concepts: we can replace percolation by many other models; for example we will define the \defi{connective dimension} in \Sr{sec nu}. In \eqref{msb} we made the decay rate polynomial in $|g|$, but we can change it to stretched-exponential; see \Sr{sec interm}. Finally, we can replace $X = Cay(G,S)$ by an arbitrary graph or metric space $X$ and adapt the definition of $\mu$ by dropping its symmetry; see \Sr{sec BG}. I think of this versatility as an advantage: it is a tool that can be taylored to the desired application.

\section{Properties of $pdim(G)$} \label{sec prop}

It is far from clear from the definitions that $pdim(G)$ is a non-trivial quantity, let alone a well-behaved one. The following establishes this:
\begin{proposition} \label{prop d}
\Fe\ $d\geq 1$, we have $pdim(\Z^d)= d$. %=\ndim(\Z^d) 
\end{proposition}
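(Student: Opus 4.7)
My plan is to prove the two inequalities $\pdim(\Z^d) \geq d$ and $\pdim(\Z^d) \leq d$ separately, using a spread-out construction for the lower bound and a second-moment concentration argument for the upper bound.

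For the lower bound, fix $s < d$ and take $\mu_R$ to be the uniform probability measure on the ball $B_R = \{g \in \Z^d : |g| \leq R\}$, so $\mu_R(g) \asymp R^{-d}$. The constraint $\mu_R(g) < b|g|^{-s}$ is binding at $|g| = R$, where it reduces to $R^{s-d} = o(1)$; since $s < d$, this holds for any fixed $b > 0$ once $R$ is large enough, so $\mu_R \in \cm_s^b(\Z^d)$. The crucial claim is that $\la_c(\mu_R) \to 1$ as $R \to \infty$. To see this, fix $\la = 1 + \epsilon$ and explore the cluster of the origin in $G^{\mu_R}(\la)$ in breadth-first order. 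A direct second-moment computation shows that the expected number of collisions within the first $N \asymp c\log R$ generations is $O(\la^{2N}/R^d)$, which tends to $0$ as $R \to \infty$. Off this event the exploration is distributed exactly as a Poisson Galton--Watson process with mean $\la > 1$, which survives with positive probability; combined with translation-invariance (a standard spread-out-percolation argument) this upgrades to a positive probability of an infinite cluster at $\la = 1+\epsilon$. Hence $\la_c(\mu_R) \leq 1 + \epsilon$ for all large $R$, and as $\epsilon$ is arbitrary, $\pdim(\Z^d) \geq s$ for every $s < d$.

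For the upper bound, fix $s > d$, any $b > 0$, and any $\mu \in \cm_s^b(\Z^d)$. The tail estimate $\sum_{|g|>R} \mu(g) \leq b \sum_{|g|>R} |g|^{-s} = O(R^{d-s})$ shows that for some $R_0 = R_0(b,s,d)$, the constraint $\mu(B_{R_0}) \geq 1/2$ holds uniformly over $\cm_s^b(\Z^d)$. Cauchy--Schwarz on $B_{R_0}$ then yields $\|\mu\|_2^2 \geq \mu(B_{R_0})^2/|B_{R_0}| \geq \alpha(b,s,d) > 0$. This quantity equals $\mu * \mu(e)$, the two-step return probability of the random walk driven by $\mu$, and quantifies how far the model is from being tree-like. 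A standard cluster-exploration/collision argument (the expected number of two-step returns to the origin per first-generation vertex is $\la^2 \|\mu\|_2^2$, which strictly lowers the effective offspring rate) converts this into a uniform gap $\la_c(\mu) \geq 1 + c\,\alpha(b,s,d) > 1$, proving $\pdim(\Z^d) \leq d$.

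The main obstacle in both directions is the passage from the analytic estimate on $\mu$ to the actual critical value $\la_c$. For the lower bound this is the survival-to-infinity step, which must extend the Galton--Watson approximation beyond the logarithmic BFS depth where it breaks down. For the upper bound it is the quantitative ``loop correction'' $\|\mu\|_2^2 \mapsto \la_c - 1$, since the naive branching comparison gives only the trivial bound $\la_c \geq 1$ and one needs a genuine second-moment argument (e.g.\ triangle-condition or BK-type path expansion) to extract a uniform gap. Both of these are essentially robustness statements for the mean-field phase transition of long-range percolation on $\Z^d$, and they are treated in considerable generality in the parallel work of Spanos \& Tointon \cite{SpaToiSpr}, of which this proposition is the simplest nontrivial case.
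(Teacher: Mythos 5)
Your proposal follows essentially the same route as the paper: the upper bound comes from showing that any $\mu$ with decay rate $s>d$ must place a fixed fraction of its mass on a bounded ball, forcing a uniform collision/multi-edge correction to the Poisson branching comparison and hence $\la_c(\mu)$ bounded away from $1$; the lower bound comes from uniform measures on large balls together with the spread-out percolation fact that $\la_c(\mu_R)\to 1$. The only differences are presentational: the paper routes the upper bound through the general growth bound of Proposition~\ref{prop UB} (dyadic annuli producing a single atom $\mu(g_j)>\delta$) where you use $\|\mu\|_2^2$ via Cauchy--Schwarz, and for the lower bound it simply cites Penrose and its successors, whereas you sketch the second-moment step but -- as you correctly flag -- the survival-to-infinity upgrade beyond logarithmic depth is the actual content of those cited results, so on that point your argument is no more self-contained than the paper's.
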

More generally, we will see below that for every \g of polynomial growth, $pdim(G)$ equals the growth exponent. In particular, $pdim(G)$ turns out to be an integer for such groups.

To prove these claims we will use the following proposition, which states that the exponent of the (polynomial) growth rate of \g upper bounds $pdim(G)$. Given a \Cg\ $X$, we let $B_X(n)$ denote the ball of radius $n$ around the identity in $X$.
\begin{proposition} \label{prop UB}
Let $X$ be a \Cg\ of a group \g \st\ $|B_X(n)|< cn^k$ for some $c,k\in \R$ and every $n\in \N$. Then $pdim(G)\leq k$.
\end{proposition}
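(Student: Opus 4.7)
My plan is to combine the Poisson branching-process domination remarked just after~\eqref{def lac} with a geometric estimate on the off-identity mass of $\mu\in\cm_s^b(G)$ furnished by the polynomial ball growth $|B_X(n)|<cn^k$.

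For the geometric estimate, let $s>k$ and $\mu\in\cm_s^b(G)$. The pointwise bound $\mu(g)<b|g|^{-s}$ gives $\sum_{g\neq e}\mu(g)\le b\sum_{n\ge 1}|S_n|\,n^{-s}$, where $|S_n|=|B_X(n)|-|B_X(n-1)|$ is the $n$-th sphere. Since the direct estimate $|S_n|\le |B_X(n)|<cn^k$ would only make this series converge for $s>k+1$, I would apply Abel summation to rewrite it as $|B_X(N)|\,N^{-s}+\sum_{n\ge 1}|B_X(n)|\bigl(n^{-s}-(n+1)^{-s}\bigr)$; the boundary term is bounded by $cN^{k-s}\to 0$ as $N\to\infty$ since $s>k$, and the remaining sum is bounded by $cs\sum_n n^{k-s-1}=cs\,\zeta(s-k+1)<\infty$, again because $s>k$. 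Writing $C=C(s,k,c):=cs\,\zeta(s-k+1)$, this yields $\sum_{g\neq e}\mu(g)\le bC$.

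Next, the exploration of the cluster of the identity in $G^\mu(\la)$ is stochastically dominated by a Galton--Watson process with Poisson$(\la\sum_{g\neq e}\mu(g))$ offspring, which is subcritical whenever $\la\sum_{g\neq e}\mu(g)\le 1$, forcing all clusters to be almost surely finite. Combining with the previous estimate gives $\la_c(\mu)\ge 1/\sum_{g\neq e}\mu(g)\ge 1/(bC)$ uniformly over $\mu\in\cm_s^b(G)$. For any $s>k$, choosing $b<1/C(s,k,c)$ then forces $\la_c(\mu)>1$ uniformly, so $\liminf_{\mu\in\cm_s^b(G)}\la_c(\mu)>1$ for that $b$.

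The step I expect to be most delicate is the ``$\exists b$'' quantifier in the definition of $pdim$: the argument above rules out every $b<1/C(s,k,c)$, but to conclude $pdim(G)\le k$ I must rule out every positive $b$. For $b\ge 1/C$, any candidate sequence $\mu_i\in\cm_s^b(G)$ witnessing $\liminf=1$ must satisfy $\mu_i(e)\to 0$ by branching domination, so $\sum_{g\neq e}\mu_i(g)\to 1$; combined with the Abel estimate this already pins $b\ge 1/C$. To close the gap I expect to exploit that $\mu_i$ has bounded effective range $R_b\sim b^{1/(s-k)}$, since its tail mass satisfies $\sum_{|g|>R}\mu_i(g)\le b\,c\,R^{k-s}/(s-k)$, so that the percolation model is effectively short-range on a graph of growth~$k$ and thus forces $\la_c$ to stay strictly above $1$.
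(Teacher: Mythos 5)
There is a genuine gap, and it sits exactly where you flagged it. Your Abel-summation estimate $\sum_{g\neq e}\mu(g)\le bC(s,k,c)$ is correct, but since $\mu$ is a probability measure this only gives information when $bC<1$: it then forces $\mu(e)\ge 1-bC$ and hence $\lambda_c(\mu)\ge 1/(1-bC)>1$. To prove $pdim(G)\le k$ you must rule out \emph{every} $b$, and your treatment of $b\ge 1/C$ remains a sketch whose last step does not hold as stated. The assertion that a measure of bounded effective range on a graph of polynomial growth ``forces $\lambda_c$ to stay strictly above $1$'' is not a consequence of short-rangeness alone; what actually does the work is a pigeonhole-plus-multigraph argument that your write-up never supplies. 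The paper's proof runs as follows: the same tail estimate you derived (applied from radius $M$ onward rather than from $1$, so that $M$ is a free parameter absorbing $b$) shows that every $\mu\in\mathcal{M}_{k+\epsilon}^b(G)$ puts mass $>1/2$ on a \emph{fixed} finite ball $A_0=B(M(b,c,k,\epsilon))$; since $|A_0|$ is bounded, pigeonhole yields a single element $g$ with $\mu(g)>\delta$ for a uniform $\delta>0$. The key point is then that in $G^\mu(\lambda)$ the edges to $g$ are \emph{parallel Poisson edges}: the expected number of \emph{distinct} neighbours of the identity is $\sum_h\bigl(1-e^{-\lambda\mu(h)}\bigr)\le \lambda-\delta'$ with $\delta'=\lambda\delta-1+e^{-\lambda\delta}>0$, so the cluster exploration is dominated by a branching process of mean $\lambda-\delta'<1$ for all $\lambda$ in a neighbourhood of $1$, giving $\lambda_c(\mu)\ge 1+\eta(\delta)>1$ uniformly over the class. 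This ``wasted mass from multi-edges onto an atom'' observation is the heart of the proof and is absent from your proposal.

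Two smaller remarks. First, your claim that any witnessing sequence has $\mu_i(e)\to 0$ is correct but does not advance the argument, since the mass can simply migrate to other elements of a fixed finite ball. Second, your effective-range computation $\sum_{|g|>R}\mu(g)\lesssim bcR^{k-s}$ is exactly the ingredient needed to locate the fixed finite ball $A_0$; had you combined it with the pigeonhole and the Poisson multi-edge observation above, your argument would essentially coincide with the paper's. As it stands, the proof is incomplete for large $b$.
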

%%%%%%%%%%%%
\begin{proof}
Suppose, to the contrary, that $pdim(G)\geq k+\epsilon$ for some $\epsilon>0$. Pick some $M\in \N$ (which will depend on $\epsilon,c,k$ in a way that we make precise below), and let $A_0:=B_o(M)$, and for every $i\in N_{>0}$,  define the ``annulus'' $A_i:= B_o(M 2^i) \sm B_o(M 2^{i-1})$.

Let $\mu\in \cm_{k+\epsilon}^b(G)$ for some $b\in \R$. Then \fe\ $i>0$, we have %
\labtequ{ineq mu}{$\mu(A_i) \leq c  (M 2^i)^k b (M 2^{i-1})^{-(k+\epsilon)}$}
 because $A_i$ has at most $|B_o(M 2^i)| \leq c  (M 2^i)^k$ elements,  each of which elements $g$ has $\mu(g)\leq b (M 2^{i-1})^{-(k+\epsilon)}$ by the definitions. Rearranging \eqref{ineq mu} by cancelling out the factor $(M2^i)^k$ we deduce $\mu(A_i) \leq b c 2^{k+\epsilon} M^{-\epsilon} 2^{-i \epsilon}$. Note that the only term depending on $i$ here is $2^{-i \epsilon}$, which decays exponentially in $i$, and in particular it is summable. Therefore, 

$$\mu(A_0) = 1 - \sum_{i=1}^\infty \mu(A_i) \geq 1- b c 2^{k+\epsilon} M^{-\epsilon} \sum_{i=1}^\infty 2^{-i \epsilon}.$$

Easily, we can choose $M=M(b,c,\epsilon,k)$ large enough that $\mu(A_0)>1/2$.

Let $\mu_j, j\in \N$ be a sequence of elements of $\cm_{k+\epsilon}^b(G)$ witnessing the fact that $pdim(G)\geq k+\epsilon$, i.e.\ $\lim_{j\to \infty} \la_c(\mu_j) =1$. Since the above calculations did not depend on the choice of $\mu$, we deduce that $\mu_j(A_0)>1/2$ \fe\ $j$.

Since $A_0$ is fixed and finite, we deduce that 
\ti\ $\delta>0$ \st\ \fe\ $j$ \ti\ $g_j$ with $\mu_j(g_j)>\delta$. 
It follows that we expect  $\delta' \approx \delta$ parallel edges between the identity $o$ and $g_j$, and so the expected  simple degree of $o$ is at most $\la- \delta'$, which is less than 1 when $\la \to 1$. This contradicts our assumption $\la_c(\mu_k) \to 1$ by comparison with the  Poisson branching process with rate $\la- \delta'$ and the argument we appealed to after \eqref{def lac}.
\end{proof}

{\bf Remark:} In the last proof we only used the assumption $|B_o(n)|< cn^k$ on a sequence $n_i$ of values of $n$ forming a geometric progression.  

\smallskip

%Here, we have extended our definitions to $\R^d$ by endowing it with the $\ell_2$ metric instead of using a \Cg. %\mymargin{see ... for more}

%\Prr{prop d} is a consequence of known results on long-range percolation and \Prr{prop UB} as follows.

\begin{proof}[Proof of \Prr{prop d}]
The lower bound $\pdim(\Z^d)\geq d$ can be deduced from a result of  \cite{PenSpr}, re-proved in \cite{BoJaRiSpr}, or from results of \cite{HeHoSaMe}. %This also yields $\pdim(\R^d)\geq d$. 
The upper bound $\pdim(\Z^d)\leq d$ is provided by \Prr{prop UB}.
\end{proof}

In parallel work, Spanos \& Tointon \cite[Theorem 1.1]{SpaToiSpr} prove a deep result extending  the aforementioned lower bounds to all groups of polynomial growth. Combining this result with \Prr{prop LB} below, and again using \Prr{prop UB} for the upper bound, we immediately obtain the following generalisation of \Prr{prop d}:

\begin{corollary} \label{corol}
For every group \g of polynomial growth $\Theta(n^d)$, we have\\ $\pdim(G)= d$.
\end{corollary}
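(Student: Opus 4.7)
The plan is to combine the two bounds exactly as the author suggests in the sentences preceding the corollary. The corollary is asserted to follow ``immediately,'' so the real work is in the ingredients that are invoked rather than in the combination itself.

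For the upper bound $\pdim(G) \leq d$, I would apply \Prr{prop UB} directly. If $G$ has polynomial growth $\Theta(n^d)$, then for any Cayley graph $X$ of $G$ with finite generating set, there is a constant $c$ with $|B_X(n)| < c n^d$ for all $n$ sufficiently large (and by adjusting $c$, for all $n\in \N$). \Prr{prop UB} then yields $\pdim(G) \leq d$. This step is essentially free once the growth assumption is translated into the hypothesis of \Prr{prop UB}.

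For the lower bound $\pdim(G) \geq d$, I would invoke the parallel work of Spanos \& Tointon \cite[Theorem 1.1]{SpaToiSpr}. Roughly, their theorem provides, for every group $G$ of polynomial growth with exponent $d$, a sequence of symmetric probability measures $\mu_i$ with the right polynomial tail (decay rate $d$) whose associated long-range percolation models approximate a Poisson branching process of mean 1 at criticality. The role of \Prr{prop LB} (referenced in the text but not yet stated in the excerpt) is to convert this probabilistic input into the language of $\pdim$: it should say that if such a sequence $\mu_i \in \cm_d^b(G)$ exists with $\la_c(\mu_i) \to 1$, then $\pdim(G) \geq d$. Applying \Prr{prop LB} with the Spanos--Tointon sequence then gives $\pdim(G) \geq d$.

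Putting the two inequalities together yields $\pdim(G) = d$. The main obstacle of the overall argument is of course the Spanos--Tointon theorem, whose proof is genuinely deep; however, we are allowed to cite it as a black box. The only real care needed on our side is to verify that the measures produced by Spanos--Tointon indeed lie in some class $\cm_d^b(G)$ for a uniform constant $b$, so that the hypothesis of \Prr{prop LB} is met, and to confirm that \Prr{prop UB} applies with the growth constant coming from $|B_X(n)| = \Theta(n^d)$. Neither check is substantial, and the combination yields the corollary in one line.
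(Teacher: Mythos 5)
Your proposal is correct and follows the paper's own route exactly: \Prr{prop UB} for the upper bound, and the Spanos--Tointon theorem combined with \Prr{prop LB} for the lower bound, where \Prr{prop LB} does precisely the bookkeeping you anticipate (taking $\mu_i$ to be the equidistribution on balls $B_X(n_i)$ and checking $\mu_i(g)\leq b^{-1}|g|^{-s}$, so that $\mu_i\in\cm_s^{b^{-1}}$). The only cosmetic difference is that the paper phrases the Spanos--Tointon input as ``virtually nilpotent groups are strongly percolative,'' i.e.\ $\la_c(\mu_n)\to 1$ for the ball equidistributions, with the decay-rate verification living inside \Prr{prop LB} rather than in the cited theorem.
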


 %, we note that if \ti\ $\epsilon>0$ \st\ \fe\ $i$ \ti\ $g_i$ with $\mu_i(g_i)>\epsilon$, then we cannot have $\la_c(\mu_k) \to 1$ just because we expect about $\epsilon$ parallel edges between the identity and $g_i$, and so we cannot have expected simple degree at least 1 when $\la \to 1$. The fact that this obstruction must occur when $\mu_i \in \cm_s^b(\Z^d)$ with $s>d$ follows from an easy volume-growth argument. 

\smallskip
Next, we remark that if $H$ is a subgroup of $G$, then $\pdim(H)\leq \pdim(G)$, just because $\cm_s^b(H) \subseteq \cm_s^{b'}(G)$ when choosing the \Cg s appropriately. Moreover, one can show that if $H$ is a quotient of $G$, then $\pdim(H)\leq \pdim(G)$, by lifting any measure $\mu_i\in \cm_s(H)$ to a measure in $\cm_s(G)$.

\section{Long-range connective constant, and connective dimension} \label{sec nu}

As mentioned above, we can replace percolation by many other models of statistical mechanics in the definition of $pdim$. \mymargin{e.g.\ for First Passage Percolation, the Ising model, counting lattice animals, etc.} In this section we elaborate on one such example, based on the connective constant.

Recall that a \defi{self-avoiding walk} (SAW) in a graph $X$ is a sequence $x_1 \ldots x_n$ of distinct vertices such that $x_i x_{i+1}$ is an edge \fe\ relevant $i$. The \defi{connective constant $\nu(X)$} of $X$ is the exponential growth rate  of the number $c_n=c_n(X)$ of SAWs of length $n$ starting at a fixed vertex; that is,  $\nu(X):= \lim c_n^{1/n}$.

The definition of $\nu(X)$ can be adapted to our long-range setup of a group \g endowed with a symmetric probability measure $\mu \in \cm(G)$ as follows.

To each sequence $S= (e=)g_0, g_1, \ldots, g_n$ of distinct elements of \G\ (which we think of as a `\defi{long-range SAW}'), we assign a weight $w_\mu(S):= \Pi_{1\leq i\leq n} \mu(g_i g_{i-1}^{-1})$, and define $ \sigma_n(\mu):= \sum_{S= (e=)g_0, g_1, \ldots g_n} w_\mu(S)$. We then define the \defi{connective constant}   $\coco(\mu)$ of $\mu$ by 
$\coco(\mu):=\lim_n \sigma_n(\mu)^{1/n}$.

If $X=Cay(G,S)$, and $\mu$ is the equidistribution on $X$, then 
\labtequc{numu}{$\nu(\mu)=\nu(X)/|S|$,} 
because $ \sigma_n(\mu)= c_n(X)/|S|^n$. 

Thus $\coco(\mu)$ generalises $\nu(X)$  in the same way that $\la_c(\mu)$ generalises $p_c(X)$. Grimmett \& Li have obtained generalisations of results about $\nu(X)$ to $\coco(\mu)$ \cite{GriLiWei}.  Moreover, similarly to the well-known inequality $p_c(X) \geq \frac1{\coco(X)}$ \cite{Lyonsbook} for a graph $X$, one can prove the inequality 
\labtequc{lacoco}{$\la_c(\mu) \geq \frac1{\coco(\mu)}$.}

In analogy to $pdim(G)$, we define the \defi{connective dimension} $\ndim(G)$ of \g  by
\labtequc{def ndim}{$\ndim(G):= \sup \left\{ s \mid \exists b \in \R_+ \text{ such that } \limsup_{\mu\in \cm_s^b(G)} \coco(\mu) =1 \right\}.$}
Again, we interpret the condition $\limsup_{\mu\in \cm_s^b(G)} \coco(\mu) =1$ as being asymptotically tree-like. Indeed, it follows from \eqref{numu} that $\nu(\mu)\leq \frac{|S|-1}{|S|}<1$, and if $F_n$ is a free group freely generated by a set $S_n$ of size $n$, then $\lim \frac{|S_n|-1}{|S_n|} =1$.

%where $\coco(\mu)$ is the connective constant defined above.

Similar argument to those in \Prr{prop UB} upper bound $\ndim(\Z^d)$, and more generaly $\ndim(G)$ of any \g of polynomial growth $\Theta(n^d)$  by $d$. %,\pdim(\R^d)$ and $\ndim(\R^d)$ 
Indeed, as in that proof, for any $k>d$ and any sequence $\mu_j \in \cm_k^b, j\in \N$, we find elements $g_j\in G$ with $\mu_j(g_j)>\delta>0$ \fe\ $j$. It follows that $\nu(\mu_j)\leq 1-\delta/2$, because whenever a long-range SAW traverses an edge of the form $s g_j$, it cannot traverse the edge $g_j s$ in the next step, and the latter edge has weight $\mu_j(g_j^{-1})= \mu_j(g_j) >\delta$.

%, and in fact applies to all groups of polynomial volume growth. 
On the other hand, \eqref{lacoco} yields  $\ndim(G)\geq  \pdim(G)$ \fe\ \G. Putting these facts together, and combining with \Cr{corol}, proves 
\begin{corollary} \label{corol nu}
For every group \g of polynomial growth $\Theta(n^d)$, we have $$\ndim(G)= \pdim(G)=d.$$
\end{corollary}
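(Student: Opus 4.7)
The plan is to sandwich $\ndim(G)$ between two copies of $d$ and invoke \Cr{corol}. Since $\pdim(G) = d$ is already known, it suffices to establish $\ndim(G) \geq d$ and $\ndim(G) \leq d$ separately.

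For the lower bound I use the inequality \eqref{lacoco}. Fix $s < d$; by \Cr{corol} we have $\pdim(G) = d > s$, so there exist $b$ and a sequence $\mu_j \in \cm_s^b(G)$ with $\la_c(\mu_j) \to 1$. Then \eqref{lacoco} gives $\coco(\mu_j) \geq 1/\la_c(\mu_j) \to 1$. On the other hand $\coco(\mu) \leq 1$ always, since $\sigma_n(\mu)$ is the probability that an i.i.d.\ $\mu$-walk of length $n$ is self-avoiding, hence at most $1$. Thus $\coco(\mu_j) \to 1$, and the same sequence witnesses $\ndim(G) \geq s$; letting $s \uparrow d$ yields $\ndim(G) \geq d$.

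For the upper bound I adapt the mass-concentration argument of \Prr{prop UB}. Fix $k > d$ and $b \in \R_+$ and let $\mu \in \cm_k^b(G)$ be arbitrary. Decomposing $G$ into the annuli $A_0 = B_o(M)$, $A_i = B_o(M2^i) \setminus B_o(M2^{i-1})$ of that proof and summing the resulting geometric bounds on $\mu(A_i)$ for $i \geq 1$, I choose $M = M(b,c,k,d)$ large enough that $\mu(A_0) > 1/2$. Since $A_0$ is a fixed finite set, pigeonhole produces some $g = g(\mu) \in A_0$ with $\mu(g) > \delta$ for a constant $\delta > 0$ depending only on $b, c, k, d$. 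To convert this mass concentration into a uniform bound on $\coco(\mu)$, I interpret $\sigma_n(\mu)$ as the probability that an i.i.d.\ walk $H_1 \dotsm H_n$ with $H_i \sim \mu$ is self-avoiding, and upper-bound it by the no-backtracking probability $\Pr[H_i \neq H_{i-1}^{-1} \text{ for all } i \geq 2]$. Conditional on $H_{i-1} \in \{g, g^{-1}\}$, an event of probability at least $2\delta$ by symmetry of $\mu$, the forbidden increment $H_{i-1}^{-1}$ carries mass $> \delta$; a Markov-chain recursion on the pairs $(H_{i-1}, H_i)$ then yields $\sigma_n(\mu) \leq (1 - c\delta^2)^n$ for a universal $c > 0$. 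Hence $\coco(\mu) \leq 1 - c'\delta^2 < 1$ uniformly in $\mu \in \cm_k^b(G)$, so $\ndim(G) \leq k$; letting $k \downarrow d$ yields $\ndim(G) \leq d$.

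The main obstacle I anticipate is making this last uniform exponential bound on $\sigma_n$ fully rigorous. The author sketches $\coco(\mu_j) \leq 1 - \delta/2$ in a single line, but extracting a genuine exponential-in-$n$ decay of $\sigma_n$ from the single-element concentration $\mu(g) > \delta$ requires either the Markov-chain recursion indicated above or an equivalent subadditivity argument, and one must ensure that the resulting constants are independent of both $\mu$ and $n$.
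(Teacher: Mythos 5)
Your proposal is correct and follows essentially the same route as the paper: the lower bound $\ndim(G)\geq\pdim(G)=d$ via \eqref{lacoco} together with \Cr{corol}, and the upper bound by running the annulus/mass-concentration argument of \Prr{prop UB} to extract an element of $\mu$-mass $>\delta$ and then bounding $\sigma_n$ through the no-backtracking constraint. Your Markov-chain/independence recursion giving $\sigma_n(\mu)\leq(1-c\delta^2)^n$ is a more careful justification of the step the paper dispatches with the one-line claim $\nu(\mu_j)\leq 1-\delta/2$, but it is the same idea and either constant suffices.
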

This raises the metaconjecture that our dimensions do not depend on the choice of the model.

\section{Groups of super-polynomial growth.} \label{sec interm}

Usually, having large growth makes it easier to percolate. Nevertheless, the problem of whether every finitely generated group has a \Cg\ $X$ with $p_c(X)<1$ remained open long after it was known for groups of polynomial growth, and it was settled in \cite{DCGRSY}. In analogy, we ask

\begin{problem} 
Does $\pdim(G)=\infty$ hold \fe\ group of super-polynomial growth?
\end{problem}  

We expect groups of intermediate growth to pose the greatest challenge here. 
For such groups, it is more natural however to consider  the \defi{exponential percolation dimension} $\epdim(G)$, which we now introduce. This is defined exactly like $pdim(G)$, except that we now change the polynomial decay condition $\mu(g) < b |g|^{-s}$ in the definition of $\cm_s^b(G)$ into the stretched exponential decay $\mu(g)= O(r^{|g|^{s}})$ for some $r<1$ and $s\in (0,1]$. In other words, we let $\cme_s^r(G):= \{ \mu \in \cm(G) \mid \mu(g) < r^{|g|^{s}}\}$, and define $\epdim(G)$ as in \eqref{def pdim}, except that we replace $\cm$ by $\cme$. % Clearly, $\epdim(G)\leq 1$ \fe\ finitely generated\ $G$. 

Thus $s=1$ corresponds to exponential decay. In analogy to \Prr{prop UB}, we obtain, by repeating the same arguments, the following bound:
\begin{proposition} \label{prop UBE}
Let $X$ be a \Cg\ of a group \g \st\ \\ $|B_X(n)|<r^{|n|^{s}}$ for some $r,s\in \R$ and every $n\in \N$. Then $\epdim(G)\leq s$.
\end{proposition}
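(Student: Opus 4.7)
The plan is to adapt the proof of \Prr{prop UB} almost verbatim, with its polynomial ball/tail bounds replaced by stretched-exponential ones. Suppose for contradiction that $\epdim(G) \geq s + \epsilon$ for some $\epsilon > 0$, and let $\mu_j \in \cme_{s+\epsilon}^{r'}(G)$ be a witnessing sequence with $\la_c(\mu_j) \to 1$; here $r' < 1$ is the decay constant, and I shall write $R > 1$ for the constant appearing in the hypothesis $|B_X(n)| < R^{n^s}$. I would keep the same geometric annuli $A_0 := B_o(M)$ and $A_i := B_o(M2^i) \setminus B_o(M2^{i-1})$ ($i \geq 1$), with $M = M(R, r', s, \epsilon)$ to be fixed at the end.

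The key estimate is the following: for every $\mu \in \cme_{s+\epsilon}^{r'}(G)$ and $i \geq 1$, each $g \in A_i$ has $|g| > M2^{i-1}$ and hence $\mu(g) < (r')^{(M2^{i-1})^{s+\epsilon}}$, while $|A_i| \leq |B_X(M2^i)| < R^{(M2^i)^s}$. Multiplying, one obtains
\[
\mu(A_i) < \exp\!\Bigl( M^s 2^{is} \log R + M^{s+\epsilon} 2^{(i-1)(s+\epsilon)} \log r' \Bigr).
\]
Since $\log r' < 0$ and $s + \epsilon > s$, the negative summand eventually dominates the positive one, and the whole exponent tends to $-\infty$ at rate $-\Theta(2^{i(s+\epsilon)})$, giving a doubly-exponential decay of $\mu(A_i)$. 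In particular, $\sum_{i \geq 1} \mu(A_i) < \infty$, and because the coefficient $M^{s+\epsilon}$ in front of the negative term grows faster in $M$ than $M^s$ does in front of the positive one, I can choose $M$ large enough that this tail is less than $1/2$, uniformly over $\mu \in \cme_{s+\epsilon}^{r'}(G)$. Consequently $\mu_j(A_0) > 1/2$ for every $j$.

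From here I would close the argument exactly as in \Prr{prop UB}: since $A_0$ is a fixed finite set, pigeonhole yields a $g_j \in A_0$ with $\mu_j(g_j) > \delta := 1/(2|A_0|)$, and then comparing the exploration of the cluster of the origin in $G^{\mu_j}(\la)$ with a Poisson branching process of rate $\la - \delta'$ (for some $\delta' \approx \delta$) contradicts $\la_c(\mu_j) \to 1$. The only substantive new point — and the only place where the hypothesis $s + \epsilon > s$ is used — is the exponent comparison $M^{s+\epsilon} 2^{(i-1)(s+\epsilon)} \gg M^s 2^{is}$; this is where the geometric choice of annuli is essential, and where one has to be slightly careful because both $|A_i|$ and $\max_{g \in A_i} \mu(g)$ are now stretched exponentials rather than polynomials.
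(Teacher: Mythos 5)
Your proposal is correct and follows exactly the route the paper intends: the paper's own ``proof'' of \Prr{prop UBE} consists solely of the remark that it is obtained ``by repeating the same arguments'' as \Prr{prop UB}, and your write-up is precisely that adaptation, with the annulus decomposition kept intact and the polynomial bounds replaced by stretched-exponential ones. The one point you rightly flag as new---that $M^{s+\epsilon}2^{(i-1)(s+\epsilon)}$ eventually dominates $M^{s}2^{is}$ so that $\mu(A_i)$ decays doubly exponentially and the tail can be made uniformly small in $M$---is the only genuine change, and you handle it correctly.
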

%%%%%%%%%%%%
 
It might be that every \g of exponential growth has $\epdim(G)=1$. We will prove this when \g is non-amenable: 

\begin{theorem} \label{thm nona}
For every finitely generated non-amenable group $G$, we have  $\epdim(G)= 1$.
\end{theorem}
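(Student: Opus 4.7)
The upper bound $\epdim(G)\le 1$ is immediate from \Prr{prop UBE} applied with $s=1$: every finitely generated group satisfies $|B_X(n)|\le D^n$ for $D=|S|$, and this part does not use non-amenability. For the lower bound I plan to take as witnesses the $n$-step transition probabilities $p_n(g)$ of the simple random walk on a fixed Cayley graph $X=Cay(G,S)$ with $S$ symmetric. Each $p_n$ belongs to $\cm(G)$ because $S=S^{-1}$. Kesten's theorem, together with Cauchy--Schwarz and vertex-transitivity, provides a spectral radius $\rho<1$ such that $p_n(g)\le C\rho^n$ uniformly in $g$. Since $p_n$ is supported on $B_X(n)$, this bound upgrades to $p_n(g)\le r^{|g|}$ for any fixed $r\in(\rho,1)$ once $n$ is large enough, so $p_n\in\cme_1^r(G)$ eventually.

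To prove $\la_c(p_n)\to 1$, I fix $\la>1$. The component of the identity $e$ in $G^{p_n}(\la)$ can be explored as a branching random walk on $G$: by the Poissonian structure, the total number of edges at each vertex is Poisson$(\la)$ and, conditional on this count, the other endpoints are i.i.d.\ samples from $p_n$, with edge counts at disjoint unordered pairs being independent. The underlying Galton--Watson process of mean $\la>1$ survives with positive probability, while a first-moment computation bounds the expected number of particle-visits to any fixed site $g$ by
\[\sum_{t\ge 0}\la^t\, p_{tn}(e,g)\ \le\ C\sum_{t\ge 0}(\la\rho^n)^t,\]
which is finite as soon as $n$ is large enough that $\la\rho^n<1$. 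Hence each site is visited only finitely often almost surely, and on the positive-probability survival event the \emph{set} of visited sites---equal to the component of $e$---must be infinite. This forces $\la_c(p_n)\le\la$ for all $n\ge n_0(\la)$, and letting $\la\downarrow 1$ yields $\la_c(p_n)\to 1$, whence $\epdim(G)\ge 1$.

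The main technical point I anticipate is the precise identification of the percolation component with the trace of this branching random walk. The BRW may revisit sites, so survival of the Galton--Watson process alone is insufficient to yield an infinite cluster; what seals the argument is the transience estimate $\la\rho^n<1$, which forbids infinitely many particles from concentrating at finitely many sites and hence forces the trace to be infinite on survival. Two minor bookkeeping items will need attention: (i) when a child $h$ of $e$ is explored, the edge back to $e$ has already been revealed, so the forward branching at $h$ is Poisson$(\la(1-p_n(h^{-1}e)))$ rather than Poisson$(\la)$; since $p_n(g)\le C\rho^n$ uniformly, this loss is negligible for large $n$ and the effective mean stays above $1$. (ii) Positive probability of an infinite component upgrades to $\la\ge\la_c(p_n)$ in the sense of \eqref{def lac} by the standard $G$-invariance and ergodicity $0$--$1$ argument.
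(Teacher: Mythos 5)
Your upper bound is fine, and your choice of witnesses is legitimate and arguably cleaner than the paper's: $p_n$ is symmetric, supported in $B_X(n)$, and satisfies $p_n(g)\le \rho^n\le \rho^{|g|}<r^{|g|}$ for every $g\neq e$ in its support, so $p_n\in\cme_1^r(G)$ for any fixed $r\in(\rho,1)$. (Your item (ii) is also a non-issue: by \eqref{def lac}, positive probability of an infinite component already gives $\la\ge\la_c$, no $0$--$1$ law required.) The problem is exactly at the point you flag and do not close. The set of sites visited by the branching random walk is \emph{not} ``equal to the component of $e$'': the cluster exploration is a \emph{pruned} BRW, in which a particle landing on a previously visited site fails to spawn a fresh exploration there, so that particle's entire prospective subtree is lost (and candidate edges into the already-explored set are discarded altogether). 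Your transience computation shows that the \emph{unpruned} BRW survives and visits each site finitely often, hence has infinite trace on survival --- but that says nothing about the pruned process. A first-moment attempt to control the pruning illustrates why this is a real obstruction rather than bookkeeping: a single collision at generation $s$ deletes on the order of $\la^{t-s}$ expected descendants by generation $t$, so even though the expected number of collisions at generation $s$ is only $\epsilon\la^s$ with $\epsilon=\epsilon(n)$ small, the expected pruned mass by generation $t$ is of order $\epsilon t\la^t$, which swamps $\la^t$ for $t\gtrsim 1/\epsilon$. Showing that the pruned BRW still survives is precisely the hard content of statements of the form $\la_c(\mu)\to 1$ (compare the paper's description of the Spanos--Tointon lower bound in the polynomial-growth case as a ``deep result''), and ``transience of the BRW'' is not known to me to be a citable sufficient condition; if you intend to invoke such a theorem you must name it, and if you intend to prove it you need a second-moment or isoperimetric mechanism that you have not supplied.

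The paper sidesteps this entirely by a different choice of witnesses. It takes $\mu_k$ to be \emph{uniform} on a symmetric generating set $S_k\subseteq\Sigma^k$ provided by a theorem of Thom, with $\rho_{S_k}\le 4k\ln(|\Sigma|)\rho_\Sigma^k$. Uniformity makes $G^{\mu_k}(\la)$ an ordinary Bernoulli bond percolation on $G_k=Cay(G,S_k)$, so the claim becomes $d(G_k)p_c(G_k)\to 1$; Kesten's theorem gives $\iota(G_k)\ge 1-\rho_{S_k}\to 1$, and the Benjamini--Schramm inequality $p_c(X)<1/(d(X)\iota(X))$ then finishes. In other words, the depletion problem you are facing is solved inside that cited inequality by isoperimetric control of the explored set --- a tool unavailable for your non-uniform kernels $p_n$. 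To repair your argument, either switch to uniform measures on suitable generating sets so that a citable percolation bound applies (which essentially reproduces the paper's proof), or supply an actual proof that the pruned BRW survives under $\la\rho^n<1$.
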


The main idea of the proof of this was contributed by Gabor Pete (private communication).

\begin{proof}
Let $\Sigma$ be a finite symmetric generating set of $G$, and let $G_1:=Cay(G,\Sigma)$ be the corresponding Cayley graph. %As observed by Pak \& Smirnova-Nagnibeda \cite{PakSN},  for $G_k:= Cay(G, B_S(k))$, where $B_S(k)$ denotes the ball of radius $k$ in $Cay(G,S)$, \mymargin{... with multiplicities, clarify ... or just use [Andreas Thom, A remark about the spectral radius], which does the same without  multiplicities} we have $\rho(G_k)= \rho(G_1)^k$, and in particular $\rho(G_k) \to 0$. 
It was proved by Thom \cite{ThoRem} that \fe\ $k\in \N$ \ti\ a symmetric generating set $S_k\subseteq \Sigma^k$ of \g \st\\ $\rho_{S_k} \leq 4k \ln(|\Sigma|)\rho(\Sigma)^k$, where $\rho_S:= \rho(Cay(G,S))$ is the spectral radius\footnote{The \defi{spectral radius} $\rho(X)$ of a graph $X$ can be defined as $\limsup p_n^{1/n}$, where $p_n$ denotes the probability that random walk will be back to its starting vertex at step $n$. We will not work with its  definition here; we will only use certain inequalities involving $\rho(X)$.} of the \Cg\ corresponding to a finite symmetric generating set $S$ of $G$.  Moreover, it is well-known that $|S|^{-1} \leq \rho_{S}^2$ holds for every such $S\subset G$ (by comparison
with a $2S$-regular tree). Putting these two inequalities together we obtain
\labtequ{Thom}{$|S_k|^{-1} \leq \rho_{S_k}^2 \leq c k^2 \rho_{\Sigma}^{2k}$.}
Recall that Kesten's Theorem \cite{KesFul} states that $\rho_\Sigma<1$ whenever $G$ is non-amenable. Let $r:= 1/\rho_\Sigma$.

Define $\mu_k\in \cm(G)$ to be the equidistribution on $S_k$. The two extremes of \eqref{Thom} imply that $|S_k| > r^{k}$ for large enough $k$. % for every $r< 1/\rho(\Sigma)$ and some constant $c'=c'(r)$. 
Hence $\mu_k(g) \leq r^{-|g|}$ \fe\ $g\in S_k$ since $|g|\leq k$ by the choice of $S_k$. In other words, $\mu_k \in \cme_r^{1}$ for large enough $k$.

We claim that $\lim_{k\to \infty} \la_c(\mu_k) = 1$, which means that $\epdim(G)\geq 1$ by the definitions, and hence $\epdim(G)= 1$ by \Prr{prop UBE} and the fact that no group has super-exponential growth. Equivalently, since $\mu_k$ is the equidistribution on $S_k$, our percolation model governed by $\mu_k$ is identically distributed with standard Bernoulli bond percolation on $G_k:= Cay(G,S_k)$, so our claim can be reformulated as $\lim_{k\to \infty} d(G_k) p_c(G_k) = 1$, where $d(G_k)$ is the degree of the vertices of $G_k$.

For this, we recall the edge Cheeger constant $\iota(X)$ of a graph $X$, defined as $\inf_{F \subset V(X), |F|< \infty} |\partial_E F| / |F|$, where $\partial_E F$ denotes the set of edges with exactly one end-vertex in $F$. It follows from Kesten's theorem \cite{KesFul} that $\iota(X)\geq 1 - \rho(X)$. Thus we have  $lim_k \iota(G_k) \geq 1$ by the above remark.

Finally, by a theorem of Benjamini \& Schramm  we have $p_c(X)< \frac1{d(X) \iota(X) }$ for every graph $X$ with $\iota(X)>0$, see \cite[formula (12.13)]{PeteBook}. Putting these facts together, we obtain $\lim_k d(G_k) p_c(G_k) \leq 1$, which proves our claim. %Defining the symmetric measure $\mu_k$ as the equidistribution on the generating set $B_S(k)$ of $G_k$ (with the understanding that since $B_S(k)$ is a multi-set, what is meant here is that $\mu_k(s)$ is proportional to the multiplicity of $s$ in $B_S(k)$), it is not hard to see that $\lim_k \lambda_c(G_k) = 1$.
%\medskip ...Thus we have proved that $\epdim(G)\geq r= 1/\rho_\Sigma$. Using \eqref{Thom} again, we notice that $\rho_{S_k} \to 0$ as $k \to \infty$ because $\rho_\Sigma<1$ as noted above. Since $\Sigma$ was an arbitrary generating set, we can replace it by  $S_k$ and repeat the last inequality to deduce that $\epdim(G)= \infty$.
\end{proof}

\subsection*{The Gap Conjecture}
One of the best-known open problems in geometric group theory is the \emph{Gap Conjecture} $GC(\beta)$ \cite{GriGap}. It postulates that if the growth rate of a finitely generated group $G$ is $o(e^{\sqrt{n}})$ (or more generally, $o(e^{n^\beta})$ for a fixed $\beta\leq 1/2$), then \g has polynomial growth. The analogous question for $\epdim(G)$ is 

\begin{conjecture} \label{my gap}
There is $\beta>0$ \st\ $\epdim(G)<\beta$ implies $\epdim(G)=0$ for every finitely generated group $G$.
\end{conjecture}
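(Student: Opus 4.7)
The plan is to split \Cnr{my gap} into two implications and reduce the non-trivial half to the classical Gap Conjecture $GC(\beta)$: (a) that $\epdim(G)<\beta$ forces the growth bound $|B_X(n)|=o(e^{n^\beta})$, and (b) that polynomial growth forces $\epdim(G)=0$. Combined with $GC(\beta')$ for an appropriate $\beta'\le 1/2$, these two together imply the conjecture.

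Part (b) is the easier direction and should follow by mimicking the proof of \Prr{prop UB} with stretched-exponential decay in place of polynomial decay. Given any $\mu\in\cme_s^r(G)$ and the annular decomposition $A_i=B_o(M2^i)\sm B_o(M2^{i-1})$, polynomial growth of exponent $k$ yields $\mu(A_i)\le c(M2^i)^k r^{(M2^{i-1})^s}$, whose tail is super-exponentially summable in $i$. Choosing $M=M(c,k,r,s)$ large enough forces $\mu(A_0)>1/2$, hence an atom of mass at least some $\delta>0$ inside the bounded set $B_o(M)$. The Poisson branching comparison used at the end of the proof of \Prr{prop UB} then pushes $\la_c(\mu)$ uniformly away from $1$, ruling out any sequence $\mu_j$ with $\la_c(\mu_j)\to 1$ and hence giving $\epdim(G)=0$.

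Part (a) is the real content, and I would approach it by contrapositive, in the spirit of \Tr{thm nona}: starting from the hypothesis that $|B_X(n)|$ grows at least like $e^{n^\beta}$ along some sequence of $n$, construct symmetric generating sets $S_k$ of diameter $O(k^{1/\beta})$ so that the equidistribution $\mu_k$ on $S_k$ lies in $\cme_\beta^r(G)$ for suitable $r<1$, and so that standard Bernoulli percolation on $Cay(G,S_k)$ satisfies $d(G_k)\,p_c(G_k)\to 1$. In the non-amenable case this was achieved via Thom's spectral-radius bound combined with Kesten's inequality $\iota\ge 1-\rho$ and the Benjamini--Schramm bound $p_c\le 1/(d\iota)$. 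In the subexponential regime one has $\rho=1$, so a different mechanism is needed; a natural candidate is a long-range refinement of the Benjamini--Schramm bound which exploits the full isoperimetric profile of $S_k$ rather than just the edge Cheeger constant, together with the combinatorial freedom in choosing $S_k$ inside a ball of prescribed size.

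The main obstacle is precisely this last step: one must show that super-polynomial growth alone suffices to produce near-mean-field long-range percolation models at some stretched-exponential scale. The isoperimetric information that can be extracted from growth bounds is far weaker than non-amenability, and groups of intermediate growth such as Grigorchuk's are precisely those for which quantitative percolation estimates of the required strength are hardest to obtain. Any successful execution of the strategy likely requires new quantitative $p_c$--isoperimetric inequalities valid in the amenable setting, or a direct combinatorial construction of good generating sets from the structure of balls in $G$.
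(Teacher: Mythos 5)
The statement you are addressing is not a theorem of the paper: it is stated as an open \emph{conjecture}, and the paper offers no proof of it (indeed, the paper points out that this conjecture together with the one following it would \emph{imply} the Gap Conjecture $GC(\beta)$, one of the major open problems in geometric group theory). Your proposal is therefore being measured against nothing, and on its own terms it is a reduction strategy rather than a proof. Your part (b) is fine but already contained in the paper: \Prr{prop UBE} gives $\epdim(G)\leq s$ for every $s>0$ whenever $G$ has polynomial growth, hence $\epdim(G)=0$, so the annulus argument you redo is not needed. The genuine gap is your part (a), and you correctly identify it yourself: you need that failure of the growth bound $o(e^{n^\beta})$ forces $\epdim(G)\geq\beta$, i.e.\ that stretched-exponential volume growth alone produces near-mean-field long-range percolation models. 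This is exactly the content of the paper's open Problem on super-polynomial growth, of the second conjecture in the same subsection, and of the percolativity Question in \Sr{sec PG} (via \Prr{prop LB}, which needs the Cayley graph to be percolative as a hypothesis). The only case where the paper can carry this out is the non-amenable one (\Tr{thm nona}), where Thom's spectral estimate, Kesten's inequality $\iota\geq 1-\rho$, and the Benjamini--Schramm bound $p_c<1/(d\,\iota)$ are available; as you note, all three inputs degenerate when $\rho=1$, and no substitute is known.

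There is also a structural point worth flagging: even if part (a) were established, your chain $\epdim(G)<\beta \Rightarrow$ subexponential growth $\Rightarrow$ polynomial growth $\Rightarrow \epdim(G)=0$ consumes the Gap Conjecture as an input, whereas the paper's stated interest is the reverse implication (deriving $GC(\beta)$ from these percolation-dimension conjectures via \Prr{prop UBE}). Conditioning an open conjecture on another famous open conjecture plus an unestablished percolation estimate does not close anything; at best it shows the two circles of problems are tightly linked, which the paper already observes. To make progress you would need either a quantitative $p_c$--isoperimetry inequality valid for amenable groups of intermediate growth, or a direct construction of generating sets $S_k$ inside balls witnessing $\lim_k d(G_k)p_c(G_k)=1$ for such groups --- neither of which is currently available.
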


Moreover, the following seems plausible:
\begin{conjecture} \label{my gap}
For every finitely generated group \g of super-polynomial growth we have
$\epdim(G)> 0$.
\end{conjecture}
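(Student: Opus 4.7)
The goal is to exhibit, for any finitely generated group $G$ of super-polynomial growth, some $s \in (0,1]$, some $r < 1$, and a sequence $\mu_k \in \cme_s^r(G)$ with $\la_c(\mu_k) \to 1$. My plan is to mimic the proof of \Tr{thm nona} as closely as possible: there, non-amenability supplied thick symmetric generating sets $S_k \subseteq \Sigma^k$ whose equidistribution lies in $\cme_1^r(G)$, after which Kesten's theorem combined with the Benjamini--Schramm bound $p_c \leq 1/(d\iota)$ forced $\la_c \to 1$. The natural substitute here is to take $S_k := B_X(k) \sm \{e\}$ in a fixed \Cg\ $X$ of $G$ and let $\mu_k$ be the equidistribution on $S_k$. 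Then the long-range model $G^{\mu_k}(\la)$ is, up to a Poissonisation that is negligible as $|S_k|\to\infty$, Bernoulli bond percolation on $G_k := Cay(G,S_k)$, and the target $\la_c(\mu_k) \to 1$ becomes $d(G_k)\cdot p_c(G_k) \to 1$.

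The decay-rate side is straightforward modulo a growth input. Since $|g|\leq k$ for every $g \in S_k$, the containment $\mu_k \in \cme_s^r(G)$ requires $1/|S_k| \leq r^{k^s}$ for large $k$, equivalently $|B_X(k)| \geq r^{-k^s}$. This is precisely a stretched-exponential lower bound on growth of the type predicted by Grigorchuk's $GC(s)$. In any regime where such a bound is available --- non-amenable groups, or any class to which a form of the Gap Conjecture is known --- the first step is essentially automatic, and the whole problem reduces to the percolative mean-field collapse $d(G_k)\cdot p_c(G_k) \to 1$.

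The difficulty sits entirely in this second step, and most acutely for amenable groups of intermediate growth. The heuristic to exploit is that $G_k$ becomes locally tree-like at large $k$: most pairs of random neighbours of a vertex do not close a short cycle, so the cluster of the origin should be well approximated by a Galton--Watson tree of mean offspring $\la$. I would attack this by two routes in parallel: (i) replace the Cheeger constant in the Benjamini--Schramm bound by a scale-sensitive \emph{profile} or \emph{anchored} isoperimetric quantity which remains bounded below in the super-polynomial regime even though $\iota(G_k)=0$ for amenable $G$; (ii) couple the cluster exploration in $G^{\mu_k}(\la)$ directly to a Poisson branching process with mean $\la$, using the size of $B_X(k)$ to bound the density of short cycles and show that the coupling succeeds with probability tending to $1$ as $k\to\infty$.

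The main obstacle is producing such a mean-field estimate for symmetric long-range models on \emph{amenable} groups, where the Cheeger-based machinery of \Tr{thm nona} is vacuous; this is in the same spirit as the breakthrough of \cite{DCGRSY} on $p_c<1$ for arbitrary Cayley graphs. Accordingly, a realistic intermediate target is the conditional statement ``a stretched-exponential growth gap plus mean-field behaviour of $B_X(k)$-percolation on amenable groups imply the conjecture'', after which the unconditional version would follow from further progress on the Gap Conjecture and on long-range mean-field behaviour respectively.
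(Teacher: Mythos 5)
There is no proof to compare against: the statement you were given is stated in the paper as an open \emph{conjecture}, and your proposal does not close it either --- by your own account it reduces the claim to two other open problems. Concretely, your first step (getting $\mu_k \in \cme_s^r(G)$ for the equidistribution on $S_k = B_X(k)\sm\{e\}$) needs $|B_X(k)| \geq r^{-k^s}$ for some fixed $s>0$, i.e.\ a stretched-exponential lower bound on growth; for a general group of super-polynomial growth no such bound is known for any $s>0$ --- this is essentially the Gap Conjecture $GC(\beta)$, which the paper explicitly flags as one of the best-known open problems in the area (and notes that the present conjecture, combined with the gap conjecture for $\epdim$, would \emph{imply} $GC(\beta)$, so an unconditional proof along these lines would be a major result). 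Your second step, $d(G_k)\,p_c(G_k)\to 1$ for the Cayley graphs on balls, is precisely the ``percolative'' property that the paper isolates in Section~\ref{sec PG} and poses as an open Question; it is known only for virtually nilpotent groups (Spanos--Tointon) and, via the Thom/Kesten/Benjamini--Schramm route you cite, for non-amenable groups. For amenable groups of intermediate growth --- exactly the case where the conjecture has content beyond \Tr{thm nona} --- both your routes (i) and (ii) are heuristics with no estimate behind them.

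That said, your strategy is the ``right'' one in the sense that it is the same template the paper itself uses everywhere (equidistribution on balls or on thick generating sets, then show mean-field collapse of $\la_c$): it matches \Prr{prop LB} and the proof of \Tr{thm nona}. So the honest summary of your submission is a correct \emph{conditional} reduction --- ``stretched-exponential growth gap $+$ percolativity of amenable groups $\Rightarrow$ the conjecture'' --- which is a reasonable observation but not a proof, and you should present it as such rather than as a proof attempt.
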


Note that these two conjectures combined would imply the Gap Conjecture $GC(\beta)$ by \Prr{prop UBE}.

\section{Percolative Groups} \label{sec PG}

%Both \Tr{thm nona} and the main result of the paper \Tr{} are 
\Cr{corol} is proved by considering $\mu_n$ to be the equidistribution in the ball of radius $n$ of a \Cg\ of \G, and \Tr{thm nona} follows a similar idea. This raises the question of whether all groups have this property. Let me state it more precisely.

Let  $X=Cay(G,S)$ be a finitely generated \Cg\ of a group \G. Let $\mu_n, n\in \N$ be the equidistribution on the ball of radius $n$ of $X$. Call $X$ \defi{percolative}, if $\lim \la_c(\mu_n)=1$. Call \g weakly percolative if it has a percolative \Cg, and call \g (strongly) percolative if each of its finitely generated \Cg s is percolative. 

\begin{question}
Is every group weakly/strongly percolative?
\end{question}
%The proofs of \Tr{thm nona}, 

I suspect that the groups of \cite{KasPakOsc,WilUni} fail to be percolative. 

Spanos \& Tointon \cite[Theorem 1.1]{SpaToiSpr} proved that virtually nilpotent groups are strongly percolative. The following proposition provides the lower bound needed for the proof of \Cr{corol}.

\begin{proposition} \label{prop LB}
Let $X$ be a percolative \Cg\ of a group \g \st\ $|B_X(n)| > b n^s$  (respectively, $|B_X(n)| > b^{|n|^{s}}$) for some $b,s\in \R$ and infinitely many $n\in \N$. Then $\pdim(G)\geq s$ (resp.\ $\epdim(G)\geq s$).
\end{proposition}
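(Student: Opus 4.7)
The plan is to take the witnessing sequence for $\pdim(G) \geq s$ (respectively $\epdim(G) \geq s$) to be the very same sequence of equidistributions $\mu_n$ on the balls $B_X(n)$ that is built into the definition of ``percolative''. Since $X$ is percolative, we already have $\la_c(\mu_n) \to 1$ by definition, so essentially all that remains is to verify that, along the subsequence of indices $n_i$ for which the growth assumption holds, the measures $\mu_{n_i}$ lie in $\cm_s^{b'}(G)$ (resp.\ $\cme_s^{r}(G)$) for a constant depending only on the hypothesized constant $b$. Once that is checked, \eqref{def pdim} gives $\pdim(G) \geq s$ (resp.\ the analogous statement for $\epdim$) immediately, because the liminf over $\cm_s^{b'}(G)$ contains the sub-subsequence $\la_c(\mu_{n_i})$ which tends to $1$, and is bounded below by $1$.

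For the polynomial case I would simply check the defining inequality of $\cm_s^{1/b}(G)$ pointwise. For $g \in B_X(n_i) \setminus \{e\}$ we have $\mu_{n_i}(g) = 1/|B_X(n_i)|$, so the required bound $\mu_{n_i}(g) < (1/b)|g|^{-s}$ rearranges to $|g|^s < (1/b)\,|B_X(n_i)|$, which follows from $|g| \leq n_i$ combined with the hypothesis $|B_X(n_i)| > b\,n_i^s$. For $g \notin B_X(n_i)$ we have $\mu_{n_i}(g) = 0$, and $g=e$ is vacuous under the convention $|e|^{-s} = \infty$. The exponential case is the same calculation: setting $r := 1/b$ (which we may assume lies in $(0,1)$, otherwise the growth hypothesis is trivial), the required bound $1/|B_X(n_i)| < r^{|g|^s}$ becomes $|B_X(n_i)| > (1/r)^{|g|^s}$, and since $|g|\leq n_i$ with $1/r>1$, this is implied by $|B_X(n_i)| > (1/r)^{n_i^s} = b^{n_i^s}$.

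There is essentially no deep obstacle here; the content of the proposition is that the percolative hypothesis plus a suitable ball-growth lower bound automatically produces measures of the correct decay rate. The only bookkeeping point to watch is that the growth hypothesis is assumed only for infinitely many $n$, so one should extract a subsequence of the $\mu_n$ rather than work with all of them; this causes no problem since the definition of $\pdim$ requires only a witnessing sequence and not a uniform bound over all $n$. Tracking the relationship between the hypothesized constant $b$ and the constant appearing in the decay class $\cm_s^{\bullet}$ (namely $1/b$ in both the polynomial and exponential versions) is the entire content of the argument.
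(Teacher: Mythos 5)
Your proof is correct and follows essentially the same route as the paper: take the equidistributions on the balls $B_X(n_i)$ along the subsequence where the growth bound holds, observe that $\la_c(\mu_{n_i})\to 1$ by the percolative hypothesis, and verify pointwise that $\mu_{n_i}(g)\leq \frac{1}{b\,n_i^s}\leq b^{-1}|g|^{-s}$ places these measures in $\cm_s^{b^{-1}}(G)$ (with the analogous computation for $\cme$). The only difference is that you spell out the stretched-exponential case explicitly, which the paper leaves as ``follows the same lines.''
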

%%%%%%%%%%%%
\begin{proof}
Let $\{n_i\}_{i\in \N}$ satisfy $|B_X(n_i)| > b n_i^s$ \fe\ $i\in \N$, and let $\mu_i$ denote the equidistribution on the ball of radius $n_i$ of $X$. Thus we have $\lim \la_c(\mu_i)=1$ by our assumptions, and so it only remains to check that the $\mu_i$ have the desired decay rate. To see this, pick $g\in G$, and note that 
$$\mu_i(g) = \frac{\mathbbm{1}_{g\in B_X(n_i)}}{|B_X(n_i)|}\leq \frac1{b n_i^s} \leq b^{-1} |g|^{-s},$$
since $\mathbbm{1}_{g\in B_X(n_i)} =1$ implies $|g|\leq n_i$. Thus $\mu_i \in \cm_s^{b^{-1}}$ as desired, and hence $\pdim(G)\geq s$. 

The proof of the claim $\epdim(G)\geq s$ follows the same lines. 
\end{proof}

\section{Beyond Groups} \label{sec BG}

It is possible to modify our definitions of dimension to make them applicable to the much more general setup where the group \g is replaced by a metric space $(X,d)$ as follows. We consider sequences of  finite or infinite edge-weighted graphs $G_i= (V_i,\mu_i)$, playing the role of the $\mu$'s in \eqref{def pdim},  \eqref{def ndim}, where $V_i\subset X$ obey an arbitrary but fixed lower bound on their pairwise distances, and %$E_i\subseteq V_i^2$, 
$\mu_i:V_i^2 \to \R_+$ is symmetric (we can interpret $G_i$ as an unweighted graph by letting the support of $\mu_i$ be its edge set $E_i$, but we will not use $E_i$ explicitely). As above, we let the (possibly model-dependent) dimension of $(X,d)$ be the fastest decay-rate of a family of such $(V_i,\mu_i)$ that achieve `tree-like behaviour'. Tree-likeness can still be defined using percolation, and we can define $\pdim(X,d)$ as in \Dr{def pdim}. Alternatively, if the $G_i$ are finite, then instead of an infinite component we can ask for a \defi{giant component}, i.e.\ one containing a fixed proportion of the vertices in each $V_i$,  in a sequence of models with rates $\mu_i$ where the average vertex degree converges to 1. 

Such definitions can be applied to sequences and families of metric spaces or graphs, as well as to random rooted infinite graphs. Thus one can study e.g.\  the (percolation) dimension of the family of planar graphs of maximum degree $d$, or of a unimodular random graph \cite{BenCoa}. A particularly interesting case to consider is a triangulation $X$ of $\R^2$ with uniform volume growth of order $\Theta(r^a)$ for non-integer $a>1$; such triangulations have been constructed in \cite{BeSchrRec}.

%the percolation dimension of Galton-Watson trees, the Incipient Infinite Cluster, or the UIPT. 

Working with finite graphs $G_i= (V_i,\mu_i)$ as above allows us to consider further models. For example, we define the \defi{spectral dimension} by interpreting tree-likeness as the requirement that our $G_i$ form a sequence of Ramanujan graphs. 

\section{Other problems} \label{probs}

We conclude with some open problems. 
%Let $G,H$ be two finitely generated groups
\begin{problem}
Is  $\pdim(G)>0$  true \fe\ finitely generated group? 
\end{problem}

\begin{problem}
Is $\pdim(G \times H) = \pdim(G) + \pdim(H)$ for every two finitely generated groups $G,H$? (where $\times$ denotes the cartesian product). 
\end{problem}

Such statements have been studied extensively for $asdim(G)$, see  \cite{BelDraAsy}.

\medskip
A concrete group for which it would be interesting, and perhaps realistic, to determine $\pdim{}$ of is the lampligther group $L_d$ over $\mathbb{Z}^d, d\geq 1$, which is known to have dead ends of arbitrary depth \cite{CleTabDea}. We know that $\pdim(L_d)\geq d$ by \Prr{prop d}. As far as I can see, $\pdim(L_d)$ could take any value in $[d, \infty]$, and we could even have $\epdim(L_d) =1$.

\bigskip
The following is in my opinion the most interesting currently open problem about our notions: 

\begin{conjecture}
$\pdim(G), \ndim(G)$ and $\epdim(G)$ are invariant under quasi-isometries between finitely generated groups.
%Is $\pdim(G) = \pdim(H)$ for every two quasi-isometric finitely generated groups $G,H$? 
\end{conjecture}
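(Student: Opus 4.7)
Fix a quasi-isometry $f : G \to H$ with multiplicative constant $A \geq 1$ and additive constant $B \geq 0$, and a quasi-inverse $\bar f$. By symmetry it suffices to prove $\pdim(H) \geq \pdim(G)$, and the arguments for $\ndim$ and $\epdim(G)$ will run in parallel. The plan is: given a sequence $\mu_n \in \cm_s^b(G)$ witnessing $\pdim(G) \geq s$ (so $\la_c(\mu_n) \to 1$), construct $\mu_n' \in \cm_s^{b'}(H)$ with $\la_c(\mu_n') \to 1$, yielding $\pdim(H) \geq s$.

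The construction is a coarse pushforward along $f$. Set
\[
\tilde\mu_n'(h) := |B_H(B)|^{-1} \sum_{g \in G} \mu_n(g)\, \mathbbm{1}\{d_H(f(g), h) \leq B\}, \qquad \mu_n'(h) := \tfrac12\bigl(\tilde\mu_n'(h) + \tilde\mu_n'(h^{-1})\bigr).
\]
A direct computation shows $\mu_n'$ is a symmetric probability measure on $H$ without any further renormalization, since $|\{h : d_H(f(g), h) \leq B\}| = |B_H(B)|$ is a constant independent of $g$. The quasi-isometry inequality $|g|_G \geq A^{-1}(|f(g)|_H - B)$ combined with $\mu_n(g) < b|g|^{-s}$ gives $\mu_n'(h) < b'|h|^{-s}$ for some $b' = b'(A, B, b)$, so $\mu_n' \in \cm_s^{b'}(H)$.

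The crucial step is comparing critical values via a stochastic domination argument. The pushforward $f_* G^{\mu_n}(\la)$, viewed as a random multigraph on $f(G) \subseteq H$, has expected edge counts between $h_1, h_2$ of order $\la\, \mu_n(g_1^{-1}g_2)$ for $g_i \in f^{-1}(h_i)$, which is comparable to $\la\, \mu_n'(h_1^{-1}h_2)$ up to a constant $C = C(A, B)$. Since the fibers $f^{-1}(h)$ have uniformly bounded cardinality (a coarse consequence of the QI property) and $f(G)$ is $B$-dense in $H$, an infinite cluster in $G^{\mu_n}(\la)$ transports to one in $H^{\mu_n'}(C\la)$, giving $\la_c(\mu_n') \leq C\, \la_c(\mu_n)$.

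The main obstacle will be the quantitative sharpening: a constant $C > 1$ only yields $\la_c(\mu_n') \leq C$ in the limit, whereas we need $\la_c(\mu_n') \to 1$ in order to recover equality in the dimensions. To close this gap I plan to smooth each $\mu_n$ by convolving with the uniform measure on a ball of radius $r_n \to \infty$ before taking the pushforward; this makes $\mu_n$ approximately translation-invariant at the scale $B$, so that the QI behaves like an isometry up to a vanishing additive error and forces $C \to 1$. The smoothing changes only the constant $b$, and standard stochastic domination confirms $\la_c$ is preserved in the limit. The argument for $\ndim$ follows the same template, since long-range SAWs push forward compatibly through coarse maps with bounded fibers; for $\epdim$ the stretched-exponential decay $\mu_n(g) < r^{|g|^s}$ translates to $\mu_n'(h) < (r')^{|h|^s}$ with $r' = r^{A^{-s}/2} < 1$ for sufficiently large $|h|$, keeping $\mu_n' \in \cme_s^{r'}(H)$.
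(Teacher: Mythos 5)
This statement is stated in the paper as a \emph{conjecture} --- indeed the author singles it out as ``the most interesting currently open problem about our notions'' --- so there is no proof in the paper to compare against, and you should be aware that you are attempting an open problem rather than reconstructing a known argument.

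Your attempt contains a genuine gap, and it is exactly the one that makes the conjecture hard. The pushforward-plus-domination scheme can only ever yield an inequality of the form $\la_c(\mu_n') \leq C\,\la_c(\mu_n)$ with $C = C(A,B) > 1$, because the quasi-isometry distorts both distances (multiplicatively by $A$) and local edge intensities: since $f$ is not a homomorphism, $\mu_n(g_1^{-1}g_2)$ is not a function of $f(g_1)^{-1}f(g_2)$, so matching the pushforward against the genuinely $H$-invariant model $H^{\mu_n'}(\la)$ costs a constant factor in the Poisson intensities. But the definition of $\pdim$ is not coarse in $\la_c$: it requires $\la_c(\mu_n') \to 1$ exactly, and a conclusion of the form $\limsup \la_c(\mu_n') \leq C$ with $C>1$ gives nothing. (This is precisely the ``mean-field'' normalisation that distinguishes $\pdim$ from quantities like $p_c < 1$ that are robust under constant losses.) Your proposed repair --- convolving $\mu_n$ with the uniform measure on balls of radius $r_n \to \infty$ --- does not close this gap: smoothing at large scales cannot remove the \emph{multiplicative} distortion $A$ of the quasi-isometry, and the assertion that ``standard stochastic domination confirms $\la_c$ is preserved'' under convolution is unsupported; there is no domination in either direction between $G^{\mu}(\la)$ and $G^{\mu * u_{r_n}}(\la)$ in general, and $\la_c$ of the convolved measure could a priori drift away from $1$. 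Until you can show that the expected degree at criticality survives the transport with an error that vanishes as $n \to \infty$ --- not merely stays bounded --- the argument does not establish $\pdim(H) \geq \pdim(G)$, and the same obstruction applies verbatim to your treatments of $\ndim$ and $\epdim$.
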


\acknowledgement{I am grateful to Gabor Pete for providing the main idea of the proof of \Tr{thm nona}.}

\bibliographystyle{plain}
\bibliography{collective}
\end{document}